\setlist[itemize]{noitemsep} % Make itemize lists more compact
\renewcommand\thesection{\Roman{section}} % Roman numerals for the sections
\renewcommand\thesubsection{\arabic{subsection}} % roman numerals for subsections
\titleformat{\section}[block]{\large\scshape\centering}{\thesection.}{1em}{} % Change the look of the section titles
\titleformat{\subsection}[block]{\large}{\thesubsection.}{1em}{} % Change the look of the section titles
\newtheorem{thm}{Theorem}
\newtheorem{lem}{Lemma}
\title{Asymptotic Theory of Expectile Neural Networks} % Article title
\author{%
\textsc{Jinghang Lin$^{1}$, Xiaoxi Shen$^{2}$, Qing Lu$^{2}$}\\%\thanks{A thank you or further information} \\[1ex] % Your name
\normalsize 1.Department of Statistics and Probability, Michigan State University, East Lansing, Michigan 48823, U.S.A.\\
\normalsize 2.Department of Biostatistics, University of Florida, Gainesville, Florida 32611, U.S.A.\\
% Your institution
\normalsize \href{mailto:linjingh@msu.edu}{linjingh@msu.edu,} \href{mailto:lucienq@ufl.edu}{lucienq@ufl.edu}% Your email address
%\and % Uncomment if 2 authors are required, duplicate these 4 lines if more
%\textsc{Jane Smith}\thanks{Corresponding author} \\[1ex] % Second author's name
%\normalsize University of Utah \\ % Second author's institution
%\normalsize \href{mailto:jane@smith.com}{jane@smith.com} % Second author's email address
}
\date{} % Leave empty to omit a date
\begin{document}

% Print the title
\maketitle

%----------------------------------------------------------------------------------------
%	ARTICLE CONTENTS
%----------------------------------------------------------------------------------------

\section{Introduction}
Neural networks has been widely used in applications. However, the theoretical part of neural networks is not widely studied, especially statistical inference. From universal approximation theorem, a neural network with one hidden layer can approximate any continuous functions\cite{KMH}. For artificial neural networks, we use squared loss function. A unified treatment for the asymptotic normality of squared loss function could be find\cite{OMSP}.  In this paper, we use asymmetric squared loss function, which gives us a comprehensive view of conditional distribution and computation advantage. We focus on deriving the asymptotic a neural network with one hidden layer:
$$
y_i = \alpha_0 + \sum_{j = 1}^{r} \alpha_{j} \sigma(\gamma_{j}^{T}\mathbf{x}_{i} + \gamma_{0,j})
$$
We consider neural networks from statistical perspective. We rewrite the neural network in the regression form and make some assumptions:
$$
y_{i} = f_0(\mathbf{x_i}) + \epsilon_i,
$$
where $\epsilon_1, ..., \epsilon_n$ are $i.i.d$ random variables defined on $(\Omega, \mathcal{A}, \mathbb{P}) $with $E(\epsilon) = 0$ and $E(\epsilon^2) = \sigma^2 < \infty$. $f_0 \in \mathcal{F}$ is an unknown function, where $\mathcal{F}$ is the class of continuous function. However, if the complexity of $\mathcal{F}$ is the large, the estimator may be inconsistent\cite{AS}. To address this issue, we constrain the class of $\mathcal{F}$ and use sieve method to prove normality of expectile neural networks.
We refer reader to Chen for more details in the method of sieves \cite{LSSE}. Since we use asymmetic loss function, we establish the upper bounds for the empirical risk and the sample complexity based on the covering number and the Vapnik-Chervonenkis dimension \cite{AB}. The estimator of expectile neural networks can also be regarded as M-estimator\cite{EPIM}. 

The paper is organized as follows. We briefly introduce expectile neural networks in section 2. Section 3 shows the uniform law of large numbers of expectile neural networks.
In section 4, we prove the normality of expectile neural networks.

\section{Expectile neural networks}
We will briefly introduce expectile neural networks\cite{JHL}. Expectile neural network(ENN) uses asymmetric $L_2$ loss function, where we don't assume a particular functional form of covariates and use neural networks to approximate the underlying expectile regression function. By setting different $\tau$, we could get different conditional probability. By integrating the idea of neural networks into expectile regression, we propose an ENN method. We illustrate ENN with one hidden layer. The method can be easily extended to an expectile regression deep neural network with multiple layers.

\begin{figure}[h]
\begin{center}
\includegraphics[scale=0.6]{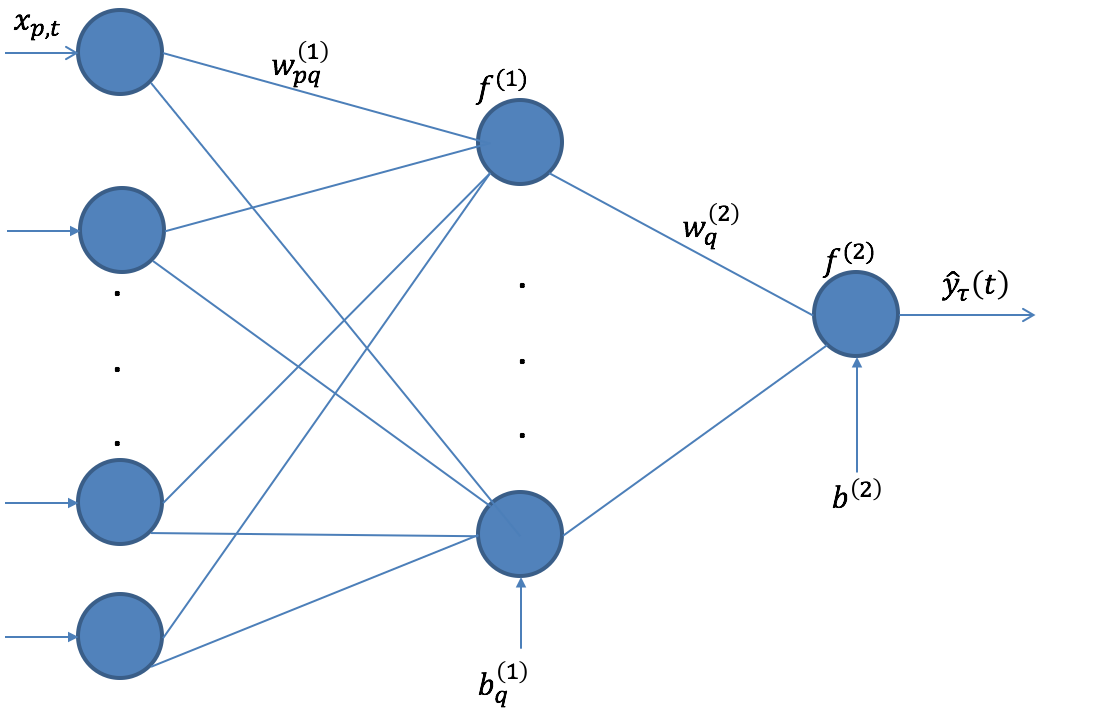}
\caption{A graphical representation of expectile neural network}
\end{center}
\end{figure}

Given the $\mathbf{x}_t$, we first build the hidden nodes $h_{q,t}$,
\begin{equation}\label{hiddenlayer}
h_{q,t}= f^{(1)}(\sum_{p=1}^{P} x_{p,t}w_{pq}^{(1)}+b_{q}^{(1)}),  q= 1,...,Q, t = 1, ..., n.
\end{equation}

where $w_{pq}$ denotes weights and $b_{q}$ denotes the bias;  $f^{(1)}$ is the activation function for the hidden layer that can be a sigmoid function, a hyperbolic tangent function, or a rectified linear units(ReLU) function. Similar to hidden nodes in neural networks, the hidden nodes in ENN can learn complex features from covariates $\mathbf{x}$, which makes ENN capable of modelling non-linear and non-additive effects. Based on these hidden nodes, we can model the conditional $\tau$-expectile, $\hat{y}_{\tau}(t)$,
\begin{equation}\label{output}
\hat{y}_{\tau}(t)=f^{(2)}(\sum_{q=1}^{Q} h_{q,t}w_{q}^{(2)}+b^{(2)}),
\end{equation}
where $f^{(2)}$, $w_{q}^{(2)}$, and $b^{(2)}$ are the activation function, weights, and bias in the output layer, respectively.$f^{(2)}$ can be identity function, sigmoid function, or a rectified linear units(ReLU) function.  A graphical representation of ENN is given in Figure 1.

From equations (\ref{hiddenlayer}) and (\ref{output}), we can have the overall function $f$:
\begin{equation}
f=f^{(2)}(\sum_{q=1}^{Q} f^{(1)}(\sum_{p=1}^{P} x_{p,t}w_{pq}^{(1)}+b_{q}^{(1)})w_{q}^{(2)}+b^{(2)}). 
\end{equation}
Then $\hat{y}_{\tau}(t) = f(\mathbf{x_i}).$
To estimate $w_{pq}^{(1)}, b^{(1)}_{q}, w_{q}^{(2)}, b^{(2)}$, we minimize the empirical risk function

\begin{equation}
\mathcal{R}(\tau)=\frac{1}{n}\sum_{i=1}^{n} L_{\tau}(y_i,f(\mathbf{x_i})),
\end{equation}
where
\begin{equation}
L_{\tau}(y_{i}, f(\mathbf{x_i}))=\left\{
\begin{aligned}
&(1- \tau) (y_{i} - f(\mathbf{x_i}))^2, & if \ y_{i} < f(\mathbf{x_i}) \\
&\tau (y_i - f(\mathbf{x_i})))^2, & if \ y_i \geq f(\mathbf{x_i}).
\end{aligned}
\right.
\end{equation}

\section{Uniform law of large numbers}

We need to consider the empirical risk of expectile neural networks. Set $Z = (X, Y), Z_i = (X_i, Y_i), i = 1,..., n$ 
where $g_1(x,y) = \vert y - f(x) \vert^2 \mathbbm{1}_{\lbrace y - f(x) \geq 0\rbrace}$, 
$g_2(x,y) = \vert y - f(x) \vert^2 \mathbbm{1}_{\lbrace y - f(x) < 0\rbrace}$ for $f \in \mathcal{F}_n$ ,  
$\mathcal{G}_{n,1} = \lbrace  \vert y - f(x) \vert^2 \mathbbm{1}_{\lbrace y - f(x) \geq 0\rbrace} : f \in \mathcal{F}_{r_n} \rbrace$ and
$\mathcal{G}_{n,2} = \lbrace  \vert y - f(x) \vert^2 \mathbbm{1}_{\lbrace y - f(x) < 0\rbrace} : f \in \mathcal{F}_{r_n} \rbrace$. The empirical risk is
\begin{equation*}
\begin{split}
R_2  &= \frac{1}{n}\sum_{i=1}^{n}\lbrace\tau \left( Y_i - f(X_i)\right)^2\mathbbm{1}_{\lbrace Y_i - f(X_i) \geq 0 \rbrace}+ (1-\tau) \left( Y_i - f(X_i)\right)^2\mathbbm{1}_{\lbrace Y_i - f(X_i) < 0\rbrace} - \\
 &   E \left( \tau \left( Y_i - f(X_i)\right)^2\mathbbm{1}_{\lbrace Y_i - f(X_i) \geq 0 \rbrace}+ (1-\tau) \left( Y_i - f(X_i)\right)^2\mathbbm{1}_{\lbrace Y_i - f(X_i) < 0\rbrace}  \right) \rbrace \\
  & = \frac{1}{n}\sum_{i=1}^{n} \tau [g_1(Z_i) - E(g_1(Z_i))] + (1 - \tau) [g_2(Z_i) - E(g_2(Z_i))].  
\end{split}
\end{equation*}

We focus on the sieve of neural networks with one hidden layer and sigmoid activation function. 
\begin{equation}
\begin{split}
\mathcal{F}_{r_n} &= \lbrace \alpha_0 + \sum_{j = 1}^{r_n} \alpha_j \sigma \left( \gamma_{j}^{T} \mathbf{x}  + \gamma_{0,j} \right) : \gamma_{j} \in \mathbb{R}^d, \alpha_j, \gamma_{0,j} \in \mathbb{R}, \sum_{j = 0}^{r_n} \vert \alpha_j\vert \leq V_n \\
& \text{ for some } V_n \geq 4 \text{ and } \max_{1 \leq j \leq r_n} \sum_{i=0}^d \vert \gamma_{i,j} \vert \leq M_n  \text{ for some } M_n > 0 \rbrace
\end{split}
\end{equation}
where 
$
r_n, V_n, M_n \rightarrow \infty \text{ as } n \rightarrow \infty.
$

To prove uniform law of large numbers, we need to introduce the lemma 1.
\begin{lem}
For $n \in \mathcal{N}$, let $\mathcal{G}_n$ be a set of functions $g: \mathcal{R}^d \rightarrow [0, B]$ and let $\epsilon > 0$. Then
\begin{equation*}
\mathbf{P} \left\lbrace \sup_{g \in \mathcal{G}_n} \Big | \frac{1}{n} \sum_{i=1}^{n} g(Z_{i}) - Eg(Z) \Big | > \epsilon \right\rbrace \leq 2\mathcal{N}(\epsilon/3, \mathcal{G}_n)e^{-\frac{2n\epsilon^2}{9B^2}}
\end{equation*}

\end{lem}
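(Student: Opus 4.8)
The plan is to reduce the supremum over the infinite class $\mathcal{G}_n$ to a maximum over a finite net, and then control each element of the net by a concentration inequality for bounded i.i.d.\ averages. Write $N = \mathcal{N}(\epsilon/3, \mathcal{G}_n)$ and let $g_1, \dots, g_N$ be the centers of a minimal $(\epsilon/3)$-cover of $\mathcal{G}_n$, so that for every $g \in \mathcal{G}_n$ there is an index $k = k(g)$ with $\|g - g_{k}\| \le \epsilon/3$ in the (uniform) norm defining the covering number. The whole argument is the classical covering-number plus Hoeffding route, with the constant $1/3$ chosen precisely so that $\epsilon$ splits evenly across a three-term triangle inequality.

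First I would bound the empirical fluctuation of an arbitrary $g$ by that of its nearest center. Writing $P_n g = \frac{1}{n}\sum_{i=1}^n g(Z_i)$ and $Pg = E g(Z)$, the triangle inequality gives
\[
|P_n g - P g| \le |P_n g - P_n g_k| + |P_n g_k - P g_k| + |P g_k - P g|.
\]
Since $|g(z) - g_k(z)| \le \epsilon/3$ holds uniformly, both $|P_n g - P_n g_k| \le \epsilon/3$ and $|P g_k - P g| \le \epsilon/3$. Hence the event $\{|P_n g - P g| > \epsilon\}$ forces $|P_n g_k - P g_k| > \epsilon/3$, and taking the supremum over $g$ shows that $\{\sup_{g}|P_n g - P g| > \epsilon\}$ is contained in $\bigcup_{k=1}^N \{|P_n g_k - P g_k| > \epsilon/3\}$.

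Then I would apply the union bound followed by Hoeffding's inequality at each fixed center. For fixed $g_k$, the summands $g_k(Z_i)$ are i.i.d.\ and take values in $[0,B]$, so Hoeffding yields
\[
\mathbf{P}\left\{|P_n g_k - P g_k| > \epsilon/3\right\} \le 2 \exp\left(-\frac{2 n (\epsilon/3)^2}{B^2}\right) = 2\exp\left(-\frac{2 n \epsilon^2}{9 B^2}\right).
\]
Summing this over the $N = \mathcal{N}(\epsilon/3,\mathcal{G}_n)$ centers gives exactly the asserted bound $2\,\mathcal{N}(\epsilon/3, \mathcal{G}_n)\, e^{-2n\epsilon^2/(9B^2)}$.

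The main obstacle is ensuring that the norm defining $\mathcal{N}(\epsilon/3, \mathcal{G}_n)$ is strong enough to dominate both the empirical discrepancy $|P_n g - P_n g_k|$ and the population discrepancy $|Pg - Pg_k|$ at once. A sup-norm (uniform) cover makes both bounds pointwise-automatic, which is what keeps the three-term split clean above; if instead one works with a data-dependent $L_1$ covering number, the control of $|Pg - Pg_k|$ is no longer immediate and one must either pass to the uniform cover (which dominates) or insert a symmetrization/ghost-sample step so that the approximation error is measured against the same empirical measure. I would therefore phrase the lemma for the uniform cover, since that is all the subsequent sieve argument requires and it avoids the symmetrization detour.
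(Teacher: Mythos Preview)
Your argument is correct and is exactly the classical covering-plus-Hoeffding proof of this inequality. The paper does not supply its own proof of Lemma~1; it states the bound as a known result (it is essentially Lemma~9.1 of Gy\"orfi et~al., cited as \cite{ODF}) and only recalls Hoeffding's inequality for a single bounded function inside the proof of Theorem~\ref{ULLN} before invoking the lemma. Your three-term sup-norm split followed by a union bound over the $\mathcal{N}(\epsilon/3,\mathcal{G}_n)$ centers is precisely the standard derivation, and your remark that the covering number must be taken in the uniform norm is apt and consistent with how the paper later uses $\mathcal{N}(\epsilon/3,\mathcal{G}_{n,1},\Vert\cdot\Vert_\infty)$.
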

\begin{thm}[Uniform law of large numbers]\label{ULLN}
Suppose $Z = (X, Y), Z_i = (X_i, Y_i), i = 1,..., n$, if $\left[ r_n(d+2) + 1 \right]log\left[r_n(d+2) + 1 \right] = 0(n)$
we can get
\begin{equation}
\sup_{f \in \mathcal{F}_n} \big| R_2 \big | \rightarrow 0, n \rightarrow \infty
\end{equation}
\end{thm}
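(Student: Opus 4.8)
The plan is to split the combined deviation $R_2$ into its two loss pieces, control each one separately with the concentration bound of Lemma~1, and then show that the stated parameter-growth condition drives the resulting tail probabilities to zero. First I would use the triangle inequality and the fact that $\sup_f$ distributes over the two terms to write
\[
\sup_{f \in \mathcal{F}_{r_n}} |R_2| \le \tau \sup_{g \in \mathcal{G}_{n,1}} \Bigl| \tfrac{1}{n}\sum_{i=1}^{n} g(Z_i) - Eg(Z)\Bigr| + (1-\tau)\sup_{g \in \mathcal{G}_{n,2}} \Bigl| \tfrac{1}{n}\sum_{i=1}^{n} g(Z_i) - Eg(Z)\Bigr|,
\]
so that it suffices to prove each supremum vanishes. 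Since $\mathcal{G}_{n,1}$ and $\mathcal{G}_{n,2}$ differ only by the sign in the indicator, I would carry out the argument in detail for $\mathcal{G}_{n,1}$ and note that $\mathcal{G}_{n,2}$ is identical.

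To apply Lemma~1 I need a uniform envelope for the functions in $\mathcal{G}_{n,1}$. Because $\sigma$ is the sigmoid, so $0\le\sigma\le 1$, and $\sum_{j=0}^{r_n}|\alpha_j|\le V_n$, every $f\in\mathcal{F}_{r_n}$ satisfies $\|f\|_\infty \le V_n$; together with a bound on the response on the compact design (or, if $Y$ is unbounded, a truncation of $Y$ at a slowly growing level with the discarded tail controlled by $E(\epsilon^2)<\infty$) this gives $0\le g\le B_n:=(\,\|Y\|_\infty+V_n)^2$ on $\mathcal{G}_{n,1}$. The technical heart is then the covering-number estimate $\mathcal{N}(\epsilon/3,\mathcal{G}_{n,1})$. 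Here I would pass through the pseudo-dimension/VC machinery of \cite{AB}: the sieve is parametrized by $W_n:=r_n(d+2)+1$ real weights, and composing the network with the squaring map $t\mapsto t^2$ and with the half-space indicator $\mathbbm{1}_{\{t\ge 0\}}$ keeps the complexity polynomial in $W_n$, yielding a bound of the form $\log\mathcal{N}(\epsilon/3,\mathcal{G}_{n,1}) \le c\,W_n\log W_n$ up to factors depending on $\epsilon$ and $B_n$.

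Plugging this into Lemma~1 gives
\[
\mathbf{P}\Bigl\{\sup_{g\in\mathcal{G}_{n,1}}\bigl|\tfrac{1}{n}\textstyle\sum_{i=1}^n g(Z_i)-Eg(Z)\bigr|>\epsilon\Bigr\} \le 2\exp\!\Bigl(c\,W_n\log W_n - \tfrac{2n\epsilon^2}{9B_n^2}\Bigr),
\]
and the hypothesis $W_n\log W_n=O(n)$, with $B_n$ suitably controlled, forces the exponent to $-\infty$, so the probability vanishes; if the bound is summable, Borel--Cantelli upgrades this to almost-sure convergence, otherwise it yields convergence in probability. The same estimate for $\mathcal{G}_{n,2}$ combined with the opening triangle inequality then finishes the proof. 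I expect the main obstacle to be the covering-number bound for the composite class $\mathcal{G}_{n,1}$: one must track how composing a sigmoidal network with the square and with the threshold indicator affects the metric entropy, and show the growth remains of order $W_n\log W_n$ so that it is compatible with the exponential decay $e^{-2n\epsilon^2/(9B_n^2)}$. Keeping the envelope $B_n$ (hence the dependence on $V_n$) from overwhelming this decay is the delicate balancing point of the argument.
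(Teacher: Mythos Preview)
Your outline matches the paper's proof in overall structure: the same triangle-inequality split into $\mathcal{G}_{n,1}$ and $\mathcal{G}_{n,2}$, reduction to one half of the loss, application of Lemma~1, and Borel--Cantelli to finish. Two points differ in execution. First, for the covering number of $\mathcal{G}_{n,1}$ the paper does not bound the composite class directly via VC/pseudo-dimension; it quotes the Anthony--Bartlett bound for the sieve itself,
\[
\mathcal{N}(\epsilon/3,\mathcal{F}_{r_n},\|\cdot\|_\infty)\le\Bigl(\tfrac{12eW_n(\tfrac14 V)^2}{\epsilon(\tfrac14 V-1)}\Bigr)^{W_n},\qquad W_n=r_n(d+2)+1,
\]
and then transfers this to $\mathcal{G}_{n,1}$ by a one-line Lipschitz estimate $|(y-f)^2\mathbbm{1}_{\{y\ge f\}}-(y-f_j)^2\mathbbm{1}_{\{y\ge f_j\}}|\le 2(M_1+M_2)\|f-f_j\|_\infty$ on the truncated range, so it never has to push the square or the indicator through the VC machinery. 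Second, and this is exactly the ``delicate balancing point'' you flag, the paper does not work with a growing envelope $B_n\asymp V_n^2$. It truncates at a \emph{fixed} level $B$, proves $\sup_{g\in\mathcal{G}_B}|\tfrac1n\sum_i g(Z_i)-Eg|\to 0$ for that $B$ via Lemma~1 (so $B$ never competes with $n$ in the exponent $-2n\epsilon^2/(9B^2)$), and only afterward lets $B\to\infty$ using $E[G(Z)\mathbbm{1}_{\{G(Z)>B\}}]\to 0$ and the SLLN for the tail. That two-step truncation is what dissolves the tension you anticipate; with your growing $B_n$ the hypothesis $W_n\log W_n=O(n)$ alone would not suffice.
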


\begin{proof}
we want to show
\begin{equation}
\sup_{f \in \mathcal{F}_n} \Big|  \frac{1}{n}\sum_{i=1}^{n} \tau [g_1(Z_i) - E(g_1(Z_i))] + (1 - \tau) [g_2(Z_i) - E(g_2(Z_i))] \Big| \rightarrow 0
\end{equation}
\begin{equation}
\begin{split}
&\sup_{f \in \mathcal{F}_n} \Big|  \frac{1}{n}\sum_{i=1}^{n} \tau [g_1(Z_i) - E(g_1(Z_i))] + (1 - \tau) [g_2(Z_i) -E(g_2(Z_i))] \Big| \\
& \leq \sup_{g_1 \in \mathcal{G}_{n, 1}} \tau \Big| \frac{1}{n} \sum_{i =1}^{n} g_1(Z_i) - E(g_1(Z_i)) \Big| + 
\sup_{g_2 \in \mathcal{G}_{n, 2}} (1-\tau) \Big| \frac{1}{n} \sum_{i =1}^{n} g_2(Z_i) - E(g_2(Z_i)) \Big|
\end{split}
\end{equation}
Now we only need to consider first part
\begin{equation}\label{one_part}
\sup_{g_1 \in \mathcal{G}_{n, 1}} \tau \Big| \frac{1}{n} \sum_{i =1}^{n} g_1(Z_i) - E(g_1(Z_i)) \Big| \rightarrow 0
\end{equation}
The proof of second part is similar.

For $B> 0$, Let $G(x) = \sup_{g_1 \in \mathcal{G}_{n,1}}\vert g_{1}(x) \vert$,$\mathcal{G}_B = \lbrace g_1 \mathbbm{1}{\lbrace G<B \rbrace}: g_1 \in \mathcal{G}_{n,1} \rbrace$.

If $g \in \mathcal{G}$,
\begin{equation}
\begin{split}
& \Big| \frac{1}{n} \sum_{i =1}^{n} g_1(Z_i) - E(g_1(Z_i)) \Big| \\
& \leq \Big| \frac{1}{n} \sum_{i =1}^{n} g_1(Z_i) -g_1(Z_i)\mathbbm{1}_{ \lbrace G(Z_i)\leq B \rbrace} \Big| + \Big| \frac{1}{n} \sum_{i =1}^{n} g_1(Z_i)\mathbbm{1}_{ \lbrace G(Z_i)\leq B \rbrace} - E(g_1(Z_i))\mathbbm{1}_{\lbrace G(Z) \leq B \rbrace} \Big| \\
&+ \Big| \frac{1}{n} \sum_{i =1}^{n} E(g_1(Z_i))\mathbbm{1}_{\lbrace G(Z) \leq B \rbrace} - E(g_1(Z_i)) \Big| \\
& \leq \Big| \frac{1}{n} \sum_{i =1}^{n} g_1(Z_i)\mathbbm{1}_{ \lbrace G(Z_i)\leq B \rbrace} - E(g_1(Z_i))\mathbbm{1}_{\lbrace G(Z) \leq B \rbrace} \Big| + \frac{1}{n} \sum_{i=1}^{n}G(Z_i)\mathbbm{1}_{\lbrace G(Z_i > B) \rbrace} + E(G(Z)\mathbbm{1}_{\lbrace G(Z)>B \rbrace})
\end{split}
\end{equation}
By $E(G(Z)) < \infty$ and the strong law of large numbers, we get
$$
\frac{1}{n} \sum_{i=1}^{n}G(Z_i)\mathbbm{1}_{\lbrace G(Z_i > B) \rbrace} \rightarrow  E(G(Z)\mathbbm{1}_{\lbrace G(Z)>B \rbrace})
$$
Let $B$ goes to infinity,
$$
E(G(Z)\mathbbm{1}_{\lbrace G(Z)>B \rbrace}) \rightarrow 0
$$
Therefore, we only need to consider,
$$
\sup_{g_1 \in \mathcal{G}_{B}} \tau \Big| \frac{1}{n} \sum_{i =1}^{n} g_1(Z_i) - E(g_1(Z_i)) \Big| \rightarrow 0
$$
Recall that if $g$ is a function $g: \mathcal{R} \rightarrow [0, B]$, then by Hoeffding's inequality
\begin{equation}
\mathbf{P} \left\lbrace \Big| \frac{1}{n} \sum_{j=1}^n g(Z_{j}) - E(g(Z)) \Big | > \epsilon \right\rbrace \leq 2e^{-\frac{2n\epsilon^2}{B^2}}
\end{equation}
By lemma 1, we have
\begin{equation}
\mathbf{P} \left\lbrace \sup_{g_1 \in \mathcal{G}_{n, 1}} \Big| \frac{1}{n} \sum_{j=1}^n g(Z_{j}) - E(g(Z)) \Big | > \epsilon \right\rbrace \leq 2 \mathcal{N}(\epsilon/3, \mathcal{G}_{n,1}, \Vert \cdot \Vert_{\infty}) e^{-\frac{2n\epsilon^2}{B^2}}
\end{equation}
We use one result about the upper bound covering number from Theorem 14.5 in Anthony and Gartlett,
\begin{equation}
\mathcal{N}(\epsilon/3, \mathcal{F}_{r_n}, \Vert \cdot \Vert_{\infty}) \leq \left( \frac{12e\left[ r_n(d+2) + 1 \right](\frac{1}{4}V)^2}{\epsilon(\frac{1}{4}V-1)} \right)^{(r_n(d+2)+1)}
\end{equation}
Recall the definition of covering number, $\mathcal{N}(\epsilon/3, \mathcal{F}_{r_n}, \Vert \cdot \Vert_{\infty})$ is minimal $N \in \mathcal{N}(\epsilon/3, \mathcal{F}_{r_n}, \Vert \cdot \Vert_{\infty})$ such that there exist functions $f_1, ..., f_N$ with the property that for every $f \in \mathcal{F}_{r_n}$ there
is a $j = j(f) \in {1, . . . , N}$ such that
$$
\sup_{x}\vert f(x) - f_{j}(x)\vert < \epsilon
$$
Since $f(x), f_j(x)$ is close enough,  $y-f(x)$ and $y- f_j(x)$ are either negative or positive in the following situation. 
\begin{equation}
\begin{split}
&\sup_{x} \vert(y - f(x))^2\mathbbm{1}_{\lbrace y-f(x) \geq 0 \rbrace}  - (y - f_j(x))^2\mathbbm{1}_{\lbrace y-f_j(x) \geq 0 \rbrace} \vert \\
& \leq \sup_{x} \vert(y - f(x))^2 - (y - f_j(x))^2\vert \\
& = \sup_{x} \vert 2y(f_j -f) + (f-f_j)(f+f_j) \vert \\
& < 2(M_1+M_2) \epsilon
\end{split}
\end{equation}
Since $y \in \mathcal{G}_{B}$ and any functions in $\mathcal{F}_{r_n}$ are bounded, there exist $M_1$ and $M_2$ such that $\vert y\vert < M_1$ and $\vert f \vert < M_2$. 
Then $\mathcal{N}(\epsilon/3, \mathcal{G}_{n,1}, \Vert \cdot \Vert_{\infty})  \leq \mathcal{N}(\epsilon/3, \mathcal{F}_{r_n}, \Vert \cdot \Vert_{\infty})$

If $\left[ r_n(d+2) + 1 \right]log\left[r_n(d+2) + 1 \right] = 0(n)$, then
\begin{equation}
\sum_{n=1}^{\infty} \exp \left\lbrace \left[ r_n(d+2) + 1 \right] log \left( \frac{12e\left[ r_n(d+2) + 1 \right](\frac{1}{4}V)^2}{\epsilon(\frac{1}{4}V-1)} \right)\right\rbrace \cdot e^{-\frac{2n\epsilon^2}{B^2}} < \infty
\end{equation}
(\ref{one_part}) will follow by using Borel-Cantelli lemma.
\end{proof}

%%%%%%%%%%%%%%%%%%%%% second approach%%%%%%%%%%%%%%%%%%%%%%%%%
\iffalse
\begin{theorem}
Let $\mathcal{G}$ be a class of functions $g: \mathbb{R}^d \rightarrow [0, B]$ with $V_{\mathcal{G}^{+}} \geq 2,$ let $p \geq 1$ let $v$ be a probability measure on $\mathcal{R}^d$, and let $0 < \epsilon < \frac{B}{4}$. Then
$$
\mathcal{M}(\epsilon, \mathcal{G}, \Vert \cdot \Vert_{L_p(v)}) \leq 3 \left(\frac{2eB^p}{\epsilon^p}log\frac{3eB^p}{\epsilon^p} \right)^{V_{\mathcal{G}^{+}}}.
$$
\end{theorem}
\fi
%%%%%%%%%%%%%%%%%%%%%%

%%%%%%%%%%%%%%%%%%%%%%%%%%%%%%%%%%%%%%%%%%%%%%%%%%%%%%%%%%%%%%%%%%%%%%%%%%%%%%%%%%%%%%%%
\section{consistency}
Since we have proven uniform laws of large numbers, we will use it to show the consistency of the neural networks.

\begin{thm}
Let $(\Omega, \mathcal{F}, P)$ be a complete probability space and let $(\Theta, \rho)$ be a metric space. Let $\left\lbrace \Theta_n \right\rbrace$ be a sequence of compact subsets of $\Theta$. Let $Q_n: \Omega \times \Theta_n \rightarrow \overline{\mathbb{R}}$ be measurable $\mathcal{F} \times \mathcal{B}(\Theta_n)/ \overline{\mathcal{B}}$, and suppose that for each $\omega$ in $\Omega$, $Q_n(\omega, \cdot)$ is lower semicontinuous on 
$\Theta_n, n = 1, 2, ....$

Then for each $n = 1, 2, ...$ there exists $\hat{\theta}_n: \Omega \rightarrow \Theta_n$ 
measurable $\mathcal{F}/\mathcal{B}(\Theta_n)$ such that for each $\omega$ in $\Omega$, 
$Q_{n}(\omega, \hat{\theta}_n(\omega)) = \inf_{\theta \in \Theta_n} Q_n(\omega, \Theta).$
\end{thm}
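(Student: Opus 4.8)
The plan is to read this as a measurable-selection (measurable minimizer) statement and to prove it in three stages: first show that the value function $v_n(\omega) := \inf_{\theta \in \Theta_n} Q_n(\omega, \theta)$ is $\mathcal{F}/\overline{\mathcal{B}}$-measurable; then show that the minimizing set defines a nonempty, closed-valued correspondence with product-measurable graph; and finally invoke a measurable selection theorem to extract $\hat{\theta}_n$. The two structural hypotheses that make this work are the completeness of $(\Omega, \mathcal{F}, P)$ and the compactness (hence Polishness) of each $\Theta_n$; their presence signals that the measurable projection theorem is the real engine of the argument.

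First I would handle the value function. For any $a \in \mathbb{R}$, the definition of the infimum gives
\[
\{\omega : v_n(\omega) < a\} = \{\omega : \exists\, \theta \in \Theta_n \text{ with } Q_n(\omega,\theta) < a\} = \pi_{\Omega}\bigl(\{(\omega,\theta) \in \Omega \times \Theta_n : Q_n(\omega,\theta) < a\}\bigr),
\]
where $\pi_{\Omega}$ is the coordinate projection. The set on the right sits in $\mathcal{F} \times \mathcal{B}(\Theta_n)$ by the joint measurability of $Q_n$. Since $\Theta_n$ is a compact metric space, hence Polish, and $(\Omega, \mathcal{F}, P)$ is complete, the measurable projection theorem forces this projection to lie in $\mathcal{F}$. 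Letting $a$ range over the rationals shows $v_n$ is measurable; note that lower semicontinuity is not used here, only for the next step.

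Next I would study the correspondence $M_n(\omega) := \{\theta \in \Theta_n : Q_n(\omega,\theta) = v_n(\omega)\}$. For each fixed $\omega$, $Q_n(\omega,\cdot)$ is lower semicontinuous on the compact set $\Theta_n$, so it attains its infimum and $M_n(\omega)$ is nonempty; being the sublevel set $\{\theta : Q_n(\omega,\theta) \le v_n(\omega)\}$ of a lower semicontinuous function, it is closed and therefore compact. Its graph $\{(\omega,\theta) : Q_n(\omega,\theta) - v_n(\omega) \le 0\}$ lies in $\mathcal{F} \times \mathcal{B}(\Theta_n)$ because $Q_n$ is jointly measurable and $(\omega,\theta) \mapsto v_n(\omega)$ is jointly measurable by the previous step. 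With a nonempty-valued correspondence whose graph is product-measurable, a Polish target $\Theta_n$, and a complete probability space, the von Neumann--Aumann selection theorem (equivalently, the Kuratowski--Ryll-Nardzewski or Brown--Purves form) yields a measurable $\hat{\theta}_n : \Omega \to \Theta_n$ with $(\omega, \hat{\theta}_n(\omega)) \in \mathrm{Graph}(M_n)$ for every $\omega$, i.e. $Q_n(\omega, \hat{\theta}_n(\omega)) = \inf_{\theta \in \Theta_n} Q_n(\omega,\theta)$; because the sections are nonempty for all $\omega$, the selection is defined on the whole of $\Omega$ rather than almost everywhere.

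I expect the selection step, together with the projection argument it rests on, to be the main obstacle, and this is precisely why completeness is assumed. The tempting elementary shortcut is to fix a countable dense $\{\theta_k\} \subset \Theta_n$, write $v_n = \inf_k Q_n(\cdot,\theta_k)$, and then select minimizers by a greedy lowest-index rule; this succeeds when $Q_n(\omega,\cdot)$ is continuous but genuinely fails under mere lower semicontinuity, since the infimum over a dense set can strictly exceed the attained minimum. Consequently the projection theorem cannot be bypassed by a naive dense-set computation, and the care in the proof lies in invoking it correctly both for the measurability of $v_n$ and for the final measurable selection.
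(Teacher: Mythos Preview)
Your argument is correct and is essentially the standard measurable-selection proof of this result: measurability of the value function via the projection theorem, nonempty closed argmin sets from lower semicontinuity on a compact space, product-measurable graph, and then an Aumann-type selection. The paper itself does not prove this theorem at all; it simply states the result and refers the reader to White and Wooldridge (1991) for the proof. So there is nothing in the paper to compare against beyond noting that what you have written is exactly the kind of argument that reference contains, and your identification of completeness of $(\Omega,\mathcal{F},P)$ and compactness (hence Polishness) of $\Theta_n$ as the structural drivers is on point.
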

The proof of theorem 2 is omitted. Interested readers can refer to White and Wooldridge\cite{WW}

\begin{lem}\label{cpt}
Let $\chi$ be a compact subset of $\mathbb{R}^d$, then for each fixed n, $\mathcal{F}_{r_n}$ is a compact set.
\end{lem}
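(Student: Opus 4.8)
The plan is to realize $\mathcal{F}_{r_n}$ as the continuous image of a compact set, working in the space $C(\chi)$ of continuous functions on $\chi$ equipped with the uniform norm $\|\cdot\|_\infty$ (the same topology that governs the covering numbers $\mathcal{N}(\epsilon/3,\mathcal{F}_{r_n},\|\cdot\|_\infty)$ used in the proof of Theorem \ref{ULLN}). First I would collect all the network weights into a single parameter vector
$$
\theta = (\alpha_0, \alpha_1, \ldots, \alpha_{r_n}, \gamma_{0,1}, \ldots, \gamma_{0,r_n}, \gamma_1, \ldots, \gamma_{r_n}) \in \mathbb{R}^{(r_n+1)+r_n(d+1)},
$$
and let $\Theta_n$ be the subset cut out by the two sieve constraints $\sum_{j=0}^{r_n}|\alpha_j| \le V_n$ and $\max_{1 \le j \le r_n}\sum_{i=0}^d |\gamma_{i,j}| \le M_n$. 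Each constraint is a closed inequality on a continuous function of $\theta$, so $\Theta_n$ is closed, and both constraints bound the respective coordinates, so $\Theta_n$ is bounded. By the Heine--Borel theorem $\Theta_n$ is compact in this finite-dimensional Euclidean space.

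Next I would define the parametrization map $T:\Theta_n \to C(\chi)$ by
$$
T(\theta)(\mathbf{x}) = \alpha_0 + \sum_{j=1}^{r_n} \alpha_j\, \sigma(\gamma_j^{T}\mathbf{x} + \gamma_{0,j}).
$$
This is well defined because $\sigma$ is continuous and each summand is continuous in $\mathbf{x}$, so $T(\theta) \in C(\chi)$; and by construction $\mathcal{F}_{r_n} = T(\Theta_n)$. The heart of the argument is showing that $T$ is continuous from the Euclidean topology on $\Theta_n$ to the uniform topology on $C(\chi)$. For this I would use that $\sigma$ is bounded ($0 \le \sigma \le 1$) and Lipschitz (with constant $1/4$ for the standard sigmoid), together with the compactness of $\chi$, which guarantees $\sup_{\mathbf{x}\in\chi}\|\mathbf{x}\| < \infty$. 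Given $\theta_k \to \theta$, I would split $\|T(\theta_k)-T(\theta)\|_\infty$ into the contribution from the change in each $\alpha_j$ (controlled by the boundedness of $\sigma$) and the change in each $(\gamma_j,\gamma_{0,j})$ inside the argument of $\sigma$ (controlled by the Lipschitz bound and $\sup_{\mathbf{x}\in\chi}\|\mathbf{x}\|$). Because the sum is finite and all coefficients stay bounded on $\Theta_n$, this estimate is uniform in $\mathbf{x}$, which is exactly what the sup-norm demands, so $\|T(\theta_k)-T(\theta)\|_\infty \to 0$.

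Finally, since $T$ is continuous and $\Theta_n$ is compact, the image $\mathcal{F}_{r_n} = T(\Theta_n)$ is compact in $(C(\chi),\|\cdot\|_\infty)$, which completes the proof. I expect the only delicate point to be the uniform-in-$\mathbf{x}$ control in the continuity estimate: this is precisely where compactness of $\chi$ and the boundedness and Lipschitz properties of $\sigma$ are indispensable, and it is what would break for a non-compact domain or an unbounded activation function.
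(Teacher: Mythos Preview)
Your argument is correct and is the natural one: parametrize the network by a vector $\theta$, show the constraint set $\Theta_n$ is closed and bounded in a finite-dimensional Euclidean space (hence compact by Heine--Borel), show the realization map $T:\Theta_n\to C(\chi)$ is continuous into the sup norm, and conclude that $\mathcal{F}_{r_n}=T(\Theta_n)$ is compact as the continuous image of a compact set. The continuity step is handled properly by combining the boundedness and Lipschitz property of the sigmoid with the uniform bound $\sup_{\mathbf{x}\in\chi}\|\mathbf{x}\|<\infty$ coming from compactness of $\chi$.

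As for comparison with the paper: the paper states Lemma~\ref{cpt} but does not supply a proof (nor does it point to a reference for this particular lemma, in contrast to Theorem~2 just above it). Your write-up therefore fills a gap rather than duplicating or diverging from an existing argument, and the approach you chose is exactly the standard one that the sieve constraints in the definition of $\mathcal{F}_{r_n}$ are set up to enable.
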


Suppose the true expectile neural networks is
\begin{equation}
y_i = f_{0}(\mathbf{x}_i) + \epsilon_i,
\end{equation}
where $\epsilon_i, ..., \epsilon_n$ are i.i.d. random variables defined on a complete probability space $(\Omega, \mathcal{A}, \mathbb{P})$ and $E(\epsilon_i) = 0$, $Var(\epsilon_i) = \sigma^2 < \infty$
\begin{equation}
Q_n(f)= \frac{1}{n} \sum_{i=1}^n E \left[ \left(\vert \tau - \mathbbm{1}_{\lbrace y_i < f(x_i)\rbrace} \vert (y_i - f(\mathbf{x}_i) )^2 \right) \right]
\end{equation}
We check the condition
\begin{lem}\label{CD3}
Suppose  
$$
\frac{\epsilon^2 min\left\lbrace \tau, 1- \tau \right\rbrace}{max\left\lbrace \tau, 1- \tau \right\rbrace - min\left\lbrace \tau, 1- \tau \right\rbrace} = \frac{\epsilon^2 min\left\lbrace \tau, 1- \tau \right\rbrace}{\vert 1- 2 \tau \vert} > \sigma^2, \text{if } \tau \neq \frac{1}{2},
$$ \text{then} $ \inf_{f: \Vert f - f_0 \Vert_n \geq \epsilon} Q_n(f) - Q_n(f_0) > 0$
\end{lem}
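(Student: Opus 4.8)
The plan is to reduce the claim to a single pointwise estimate on the population loss as a function of the fitted-value deviation, and then average over the design points. Write $\delta_i = f(\mathbf{x}_i) - f_0(\mathbf{x}_i)$, so that $y_i - f(\mathbf{x}_i) = \epsilon_i - \delta_i$ and the event $\{y_i < f(\mathbf{x}_i)\}$ coincides with $\{\epsilon_i - \delta_i < 0\}$. Setting $\rho_\tau(u) = \vert \tau - \mathbbm{1}_{\{u<0\}}\vert\, u^2$ and $\phi(\delta) = E[\rho_\tau(\epsilon - \delta)]$, the i.i.d.\ assumption on the $\epsilon_i$ together with $E[\epsilon]=0$ gives $Q_n(f) = \frac{1}{n}\sum_{i=1}^n \phi(\delta_i)$ and $Q_n(f_0) = \phi(0)$, whence $Q_n(f) - Q_n(f_0) = \frac{1}{n}\sum_{i=1}^n [\phi(\delta_i) - \phi(0)]$. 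It therefore suffices to bound $\phi(\delta)-\phi(0)$ below by a quantity of the form $m\,\delta^2 - c$, where $m=\min\{\tau,1-\tau\}$ and $c$ does not depend on $\delta$.

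The key step I would carry out is to decouple the asymmetric weight from the quadratic factor. Writing $M=\max\{\tau,1-\tau\}$ (so that $M-m=\vert 1-2\tau\vert$ as in the statement), the weight $w(u)=\vert \tau - \mathbbm{1}_{\{u<0\}}\vert$ takes values in $\{\tau,1-\tau\}$, hence $m\,u^2 \le \rho_\tau(u) \le M\,u^2$ for every $u$. Using $E[\epsilon]=0$ and $E[\epsilon^2]=\sigma^2$ I then get
\[
\phi(\delta) \ge m\,E[(\epsilon-\delta)^2] = m(\sigma^2+\delta^2), \qquad \phi(0) \le M\,E[\epsilon^2] = M\sigma^2,
\]
and subtracting yields the pointwise lower bound $\phi(\delta)-\phi(0) \ge m\,\delta^2 - (M-m)\sigma^2$. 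The decisive feature here is that $E[\epsilon]=0$ kills the cross term $-2\delta\,E[\epsilon]$ inside $E[(\epsilon-\delta)^2]$, leaving a clean $m\,\delta^2$ with no $\sigma\vert\delta\vert$ contribution.

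Averaging over $i$ and recalling $\frac{1}{n}\sum_{i=1}^n \delta_i^2 = \Vert f - f_0\Vert_n^2$, I obtain $Q_n(f) - Q_n(f_0) \ge m\,\Vert f - f_0\Vert_n^2 - (M-m)\sigma^2$. On the set $\{f:\Vert f - f_0\Vert_n \ge \epsilon\}$ the right-hand side is at least $m\epsilon^2-(M-m)\sigma^2$, a constant independent of $f$. The hypothesis $\frac{\epsilon^2 m}{M-m}>\sigma^2$ is exactly $m\epsilon^2-(M-m)\sigma^2>0$, so $\inf_{\Vert f-f_0\Vert_n\ge\epsilon}[Q_n(f)-Q_n(f_0)] \ge m\epsilon^2-(M-m)\sigma^2 > 0$, as claimed. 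When $\tau=\frac12$ one has $m=M$, the $\sigma^2$ term vanishes, and the bound reduces to $\tfrac12\epsilon^2>0$, requiring no condition, consistent with the statement.

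I expect the main obstacle to be controlling the asymmetric weight, since $w(u)$ jumps with the sign of the residual and $\phi(\delta)-\phi(0)$ is therefore \emph{not} a pure quadratic. A naive route through the identity $\rho_\tau(u)=\tfrac12 u^2+(\tau-\tfrac12)u\vert u\vert$ and a mean-value estimate produces a cross term of order $(M-m)\sigma\vert\delta\vert$, which leads to the different threshold $m\epsilon>(M-m)\sigma$; this is more restrictive than the stated one precisely when $m<\tfrac13$. The sandwich $m\,u^2\le\rho_\tau(u)\le M\,u^2$ is what sidesteps the difficulty: it separates the weight bounds from the second-moment computation and, combined with $E[\epsilon]=0$, reproduces exactly the threshold $\epsilon^2 m>(M-m)\sigma^2$. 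The only remaining points to check are the finiteness of the expectations (granted by $E[\epsilon^2]<\infty$) and that the final lower bound is genuinely uniform in $f$, which it is because the bounding constant depends only on $m$, $M$, $\sigma$, and $\epsilon$.
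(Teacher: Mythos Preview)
Your proof is correct and follows essentially the same approach as the paper: both arguments sandwich the asymmetric weight via $m\,u^2\le\rho_\tau(u)\le M\,u^2$, use $E[\epsilon]=0$ to expand $E[(\epsilon-\delta)^2]=\sigma^2+\delta^2$, and then subtract the resulting bounds $Q_n(f)\ge m(\Vert f-f_0\Vert_n^2+\sigma^2)$ and $Q_n(f_0)\le M\sigma^2$. Your write-up is somewhat more detailed (the pointwise $\phi(\delta)$ formulation, the remark on $\tau=\tfrac12$, and the discussion of why the naive mean-value route gives a worse threshold), but the mathematical content is the same.
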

\begin{proof}
\begin{equation}
\begin{split}
Q_n(f)&= \frac{1}{n} \sum_{i=1}^n E \left[ \left(\vert \tau - \mathbbm{1}_{\lbrace y_i < f(\mathbf{x_i})\rbrace} \vert (y_i - f(\mathbf{x}_i) )^2 \right) \right]\\
&\geq \frac{1}{n} \sum_{i=1}^n E\left[  min\left\lbrace \tau, 1- \tau\right\rbrace (y_i - f(\mathbf{x}_i) )^2  \right]\\
& = \frac{1}{n} min\left\lbrace \tau, 1- \tau \right\rbrace \sum_{i=1}^n \left[ \left( f(\mathbf{x}_i) - f_0(\mathbf{x}_i)\right)^2 + \sigma^2 \right]
\end{split}
\end{equation}

\begin{equation}
\begin{split}
Q_n(f_0)& = \frac{1}{n} \sum_{i=1}^n E \left[ \left(\vert \tau - \mathbbm{1}_{\lbrace y_i < f_{0}(\mathbf{x_i})\rbrace} \vert (y_i - f_0(\mathbf{x}_i) )^2 \right) \right]\\
&  \leq max\left\lbrace \tau, 1- \tau \right\rbrace \sigma^2.
\end{split}
\end{equation}

\begin{equation}
\begin{split}
&\inf_{f: \Vert f - f_0 \Vert_n \geq \epsilon} Q_n(f) - Q_n(f_0) \\
&\geq \inf_{f: \Vert f - f_0 \Vert_n \geq \epsilon} \frac{1}{n} min\left\lbrace \tau, 1- \tau \right\rbrace \sum_{i=1}^n \left[ \left( f(\mathbf{x}_i) - f_0(\mathbf{x}_i)\right)^2 + \sigma^2 \right] - max\left\lbrace \tau, 1- \tau \right\rbrace \sigma^2.\\
& = min\left\lbrace \tau, 1- \tau \right\rbrace(\sigma^2 + \epsilon^2) - max\left\lbrace \tau, 1- \tau \right\rbrace \sigma^2\\
& > 0
\end{split}
\end{equation}
\end{proof}

\begin{thm}
Under the notation given above, if
$$\frac{\epsilon^2 min\left\lbrace \tau, 1- \tau \right\rbrace}{max\left\lbrace \tau, 1- \tau \right\rbrace - min\left\lbrace \tau, 1- \tau \right\rbrace} > \sigma^2, \text{if } \tau \neq \frac{1}{2},$$
then $\Vert \hat{f}_n - f_0 \Vert_n \overset{p}{\to} 0$
\end{thm}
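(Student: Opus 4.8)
The plan is to run the classical argmin-consistency argument for $M$-estimators over a sieve, combining the uniform law of large numbers (Theorem \ref{ULLN}) with the identification/separation inequality (Lemma \ref{CD3}). Write $\hat{Q}_n(f) = \frac{1}{n}\sum_{i=1}^{n} L_\tau(y_i,f(\mathbf{x}_i))$ for the empirical risk, so that $\hat{f}_n$ is the minimizer of $\hat{Q}_n$ over $\mathcal{F}_{r_n}$ and $R_2 = \hat{Q}_n(f) - Q_n(f)$ is exactly the centered process bounded in Theorem \ref{ULLN}. The strategy is to show that the population excess risk $Q_n(\hat{f}_n)-Q_n(f_0)$ is forced to be small, since it is sandwiched by twice the uniform deviation $\sup_{f}|R_2|$, while Lemma \ref{CD3} certifies that this excess risk cannot be small unless $\hat{f}_n$ lies within $\epsilon$ of $f_0$ in the empirical norm. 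A contrapositive (contradiction) bound on $\mathbf{P}(\Vert\hat{f}_n-f_0\Vert_n\geq\epsilon)$ then closes the argument.

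Step by step: first I would invoke Theorem 2 together with Lemma \ref{cpt} (compactness of $\mathcal{F}_{r_n}$) and lower semicontinuity of $\hat{Q}_n(\omega,\cdot)$ to guarantee that a measurable minimizer $\hat{f}_n$ exists, so the object being bounded is well defined. Second, fix $\epsilon>0$ satisfying the hypothesis and read off from the computation in Lemma \ref{CD3} the explicit gap $\delta := \min\{\tau,1-\tau\}(\sigma^2+\epsilon^2) - \max\{\tau,1-\tau\}\sigma^2$, which the stated inequality makes strictly positive; hence $\Vert f-f_0\Vert_n\geq\epsilon$ forces $Q_n(f)-Q_n(f_0)\geq\delta$. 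Third, decompose the excess risk as
\[
Q_n(\hat{f}_n)-Q_n(f_0)=\bigl[Q_n(\hat{f}_n)-\hat{Q}_n(\hat{f}_n)\bigr]+\bigl[\hat{Q}_n(\hat{f}_n)-\hat{Q}_n(f_0)\bigr]+\bigl[\hat{Q}_n(f_0)-Q_n(f_0)\bigr].
\]
The first and third brackets are each at most $\sup_{f\in\mathcal{F}_{r_n}}|R_2|\to 0$ by Theorem \ref{ULLN}, and the middle bracket is nonpositive because $\hat{f}_n$ minimizes $\hat{Q}_n$. Thus $Q_n(\hat{f}_n)-Q_n(f_0)\leq 2\sup_{f}|R_2|\overset{p}{\to}0$, and combining with the gap gives $\mathbf{P}(\Vert\hat{f}_n-f_0\Vert_n\geq\epsilon)\leq\mathbf{P}(2\sup_{f}|R_2|\geq\delta)\to 0$, which is the desired $\Vert\hat{f}_n-f_0\Vert_n\overset{p}{\to}0$.

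The step I expect to be the main obstacle is the nonpositivity of the middle bracket, which silently assumes $f_0\in\mathcal{F}_{r_n}$. In general $f_0$ is only continuous and belongs to the sieve only in the limit, so the honest version replaces $f_0$ by a sieve approximant $f_n^{\ast}\in\mathcal{F}_{r_n}$ with $\Vert f_n^{\ast}-f_0\Vert_\infty\to 0$ (universal approximation), uses $\hat{Q}_n(\hat{f}_n)\leq\hat{Q}_n(f_n^{\ast})$, and then shows the bias terms $\hat{Q}_n(f_n^{\ast})-\hat{Q}_n(f_0)$ and $Q_n(f_n^{\ast})-Q_n(f_0)$ are $o(1)$. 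Controlling these requires a Lipschitz bound on $f\mapsto L_\tau(y,f)$ over the bounded range of the responses and sieve functions, exactly of the type already exploited in the covering-number estimate inside Theorem \ref{ULLN}, together with $E(\epsilon^2)<\infty$. Reconciling the growing sieve with the fixed target $f_0$, so that this approximation error does not erode the positive gap $\delta$, is the delicate point; the remainder is bookkeeping around the two inputs already established.
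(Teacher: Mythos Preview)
Your proposal is correct and follows essentially the same route as the paper: the paper's proof simply invokes Corollary~2.6 of White and Wooldridge (1991) together with Theorem~\ref{ULLN}, Lemma~\ref{cpt}, and Lemma~\ref{CD3}, which packages exactly the argmin-consistency argument you have spelled out by hand. Your explicit treatment of the sieve-approximation issue (replacing $f_0$ by an approximant $f_n^{\ast}\in\mathcal{F}_{r_n}$) is something the paper leaves implicit inside the cited black-box corollary.
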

\begin{proof}
By corollary 2.6 in White and Wooldridge(1991) with Theorem \ref{ULLN}, lemma \ref{CD3}, lemma \ref{cpt} and, we have
$$
\Vert \hat{f}_n - f_0 \Vert_n \overset{p}{\to} 0
$$
\end{proof}

\section{Normality}
We will use the following theorem to prove the normality of expectile neural network [3].
\begin{thm}
Suppose that $\mathcal{F}$ is a $P-$Donsker class of measurable functions and $\hat{f_n}$ is a
sequence of random functions that take their values in $\mathcal{F}$ such that
$$
\int \left( \hat{f_n}(x) - f_{0}(x) \right) ^2 dP(x) \overset{P}{\to} 0
$$
for some $f_0 \in L_{2}(P)$. Then
$$
\frac{1}{\sqrt{n}} \sum_{i=1}^{n} \left( (\hat{f_n} - f_0)(X_i) - P(\hat{f_n} -f_0) \right) \overset{P}{\to} 0,
$$
and
$$
\frac{1}{\sqrt{n}} \sum_{i=1}^{n} \hat{f_n}(X_i) -P\hat{f_n} \sim N(0, Pf_{0}^2 -(Pf_0)^2)
$$
\end{thm}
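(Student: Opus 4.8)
The plan is to recognize both displayed quantities as values of the empirical process $\mathbb{G}_n := \sqrt{n}(\mathbb{P}_n - P)$, where $\mathbb{P}_n$ is the empirical measure of $X_1,\dots,X_n$, so that $\mathbb{G}_n g = \frac{1}{\sqrt n}\sum_{i=1}^n\big(g(X_i) - Pg\big)$. With this notation the first assertion reads $\mathbb{G}_n(\hat f_n - f_0)\overset{P}{\to}0$, and the normality claim concerns $\mathbb{G}_n\hat f_n$. The entire content of the theorem lies in the first display; the second will follow from it by the decomposition $\mathbb{G}_n\hat f_n = \mathbb{G}_n(\hat f_n - f_0) + \mathbb{G}_n f_0$ together with the classical CLT and Slutsky's lemma. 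So I would prove the first display first and treat the rest as packaging.

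To establish $\mathbb{G}_n(\hat f_n - f_0)\overset{P}{\to}0$ I would invoke the asymptotic equicontinuity that is built into the $P$-Donsker hypothesis. Since $f_0\in L_2(P)$ and any single function is trivially Donsker, the enlarged class $\mathcal{F}\cup\{f_0\}$ is again $P$-Donsker; hence without loss of generality $f_0\in\mathcal{F}$, and $\mathcal{F}$ is totally bounded in $L_2(P)$. The Donsker property is equivalent to $\mathbb{G}_n$ being asymptotically $\rho_P$-equicontinuous in probability, where $\rho_P(f,g)=\|f-g\|_{L_2(P)}$: for every $\epsilon>0$,
\[
\lim_{\delta \downarrow 0}\ \limsup_{n \to \infty}\ P^*\Big(\ \sup_{f,g\in\mathcal{F},\,\rho_P(f,g)<\delta}\big|\mathbb{G}_n(f-g)\big| > \epsilon\ \Big) = 0.
\]

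The main obstacle is that $\hat f_n$ is a \emph{random} element of $\mathcal{F}$, so it cannot simply be substituted into this deterministic modulus. I would handle this by conditioning on the high-probability event on which $\hat f_n$ is $L_2(P)$-close to $f_0$. Fix $\epsilon,\eta>0$ and choose $\delta>0$ from the equicontinuity display so that $\limsup_n P^*\big(\sup_{\rho_P(f,f_0)<\delta}|\mathbb{G}_n(f-f_0)|>\epsilon\big)<\eta$. Because $\int(\hat f_n-f_0)^2\,dP\overset{P}{\to}0$ is exactly $\rho_P(\hat f_n,f_0)\overset{P}{\to}0$, the event $A_n=\{\rho_P(\hat f_n,f_0)<\delta\}$ satisfies $P(A_n)\to1$. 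On $A_n$ the random index lies in the region controlled by the supremum, giving
\[
P^*\big(|\mathbb{G}_n(\hat f_n-f_0)|>\epsilon\big) \le P^*\Big(\sup_{f\in\mathcal{F},\,\rho_P(f,f_0)<\delta}|\mathbb{G}_n(f-f_0)|>\epsilon\Big) + P(A_n^c).
\]
Taking $\limsup_n$ bounds the right-hand side by $\eta$, and since $\eta>0$ was arbitrary, $\mathbb{G}_n(\hat f_n-f_0)\overset{P}{\to}0$. The outer-probability and measurability bookkeeping is the routine part I would suppress.

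For the normality conclusion I would then use $\mathbb{G}_n\hat f_n = \mathbb{G}_n(\hat f_n-f_0) + \mathbb{G}_n f_0$. The first summand vanishes in probability by the step just completed, while $\mathbb{G}_n f_0 = \frac{1}{\sqrt n}\sum_{i=1}^n\big(f_0(X_i)-Pf_0\big)$ is a normalized sum of i.i.d.\ mean-zero terms with variance $Pf_0^2-(Pf_0)^2<\infty$, so it converges in distribution to $N\big(0,\,Pf_0^2-(Pf_0)^2\big)$ by the ordinary central limit theorem. Slutsky's lemma yields $\mathbb{G}_n\hat f_n \rightsquigarrow N\big(0,\,Pf_0^2-(Pf_0)^2\big)$, the stated result. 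I expect the equicontinuity-with-a-random-argument step to be the crux; everything after it is standard CLT/Slutsky assembly.
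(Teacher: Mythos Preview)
Your argument is correct and is in fact the standard proof of this result (it is essentially Lemma~19.24 in Van der Vaart, \emph{Asymptotic Statistics}): reduce to asymptotic $\rho_P$-equicontinuity of $\mathbb{G}_n$ on the Donsker class, trap the random index $\hat f_n$ inside a shrinking $L_2(P)$-ball around $f_0$, and finish with the ordinary CLT plus Slutsky. The only point worth flagging is that the paper does not supply its own proof of this theorem at all; it is quoted as a known result from the literature and then \emph{applied} in the proof of the paper's Theorem~6. So there is no in-paper argument to compare against, and your write-up simply fills in what the paper takes as a black box.
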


From theorem 4, We need to check two conditions
\begin{itemize}
\item $\mathcal{F}_{r_n}$ is $P-$Donsker class
\item$\int \left( \hat{f_n}(x) - f_{0}(x) \right) ^2 dP(x) \overset{P}{\to} 0$
\end{itemize}
The proof of $\mathcal{F}_{r_n}$ is Donsker class can be found in Van der Vaart and A.W., Wellner \cite{WCEP} . we use theorem 5 to check $\int \left( \hat{f_n}(x) - f_{0}(x) \right) ^2 dP(x) \overset{P}{\to} 0.$
\begin{thm}
Let $\sigma$ be a squashing function. Then, for every probability measure $\mu$ on $\mathcal{R}^d$, every measurable $f: \mathcal{R}^d \rightarrow \mathcal{R}$ with $\int \vert f(x) \vert^{2} \mu(dx) < \infty$, and every $\epsilon > 0$, there exists a neural network $h(x)$ in 
$$
h(x)= \lbrace \sum_{i=1}^{k}c_i \sigma(a_i^{T}x + b_i) + c_0: k \in \mathbf{N}, a_i \in \mathcal{R}^d, b_i, c_i \in \mathcal{R} \rbrace
$$
such that
$$
\int \vert f(x) - h(x) \vert^{2} \mu(dx) < \epsilon 
$$
\end{thm}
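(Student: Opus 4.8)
The plan is to prove density of the network class in the Hilbert space $L^2(\mu)$ by a duality (Hahn--Banach) argument, the classical route of Cybenko. Write $S = \{\sum_{i=1}^k c_i \sigma(a_i^T x + b_i) + c_0\}$ for the set appearing in the statement. First I would check that $S$ is a linear subspace of $L^2(\mu)$: because $\sigma$ is a squashing function it is bounded ($0 \le \sigma \le 1$), so every $h \in S$ is bounded, and since $\mu$ is a probability (hence finite) measure every such $h$ lies in $L^2(\mu)$; moreover sums and scalar multiples of networks are again networks, so $S$ is a subspace. It therefore suffices to show $\overline{S} = L^2(\mu)$. Suppose not; then $\overline{S}$ is a proper closed subspace, and since $L^2(\mu)$ is a Hilbert space there is a nonzero $g \in L^2(\mu)$ in its orthogonal complement, i.e. $\int h(x) g(x)\, \mu(dx) = 0$ for every $h \in S$. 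Taking $h(x) = \sigma(a^T x + b)$ and $h \equiv 1$ gives $\int \sigma(a^T x + b)\, g\, d\mu = 0$ for all $a \in \mathbb{R}^d$, $b \in \mathbb{R}$, together with $\int g\, d\mu = 0$.

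The heart of the argument is then to derive a contradiction by showing that these orthogonality relations force $g = 0$ $\mu$-almost everywhere; this is the statement that squashing functions are \emph{discriminatory}. Define the finite signed measure $\nu(A) = \int_A g\, d\mu$ (finite because $g \in L^2(\mu) \subset L^1(\mu)$, using $\mu(\mathbb{R}^d) = 1$), so that the relations above read $\int \sigma(a^T x + b)\, d\nu(x) = 0$. For fixed $a \neq 0$, $b$, $\phi$ I would consider $\sigma_\lambda(x) = \sigma(\lambda(a^T x + b) + \phi)$ and let $\lambda \to \infty$: by the squashing property $\sigma_\lambda \to \mathbbm{1}_{\{a^T x + b > 0\}} + \sigma(\phi)\,\mathbbm{1}_{\{a^T x + b = 0\}}$ pointwise, and since $|\sigma_\lambda| \le 1$ the bounded convergence theorem yields $\nu(\{a^T x + b > 0\}) + \sigma(\phi)\,\nu(\{a^T x + b = 0\}) = 0$. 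Letting $\phi \to -\infty$ shows $\nu$ vanishes on every open half-space, and then $\phi \to +\infty$ shows it vanishes on every hyperplane. Consequently $\int \mathbbm{1}_{\{c \le a^T x \le d\}}\, d\nu = 0$ for all intervals, hence $\int s(a^T x)\, d\nu = 0$ for simple $s$ and, by another bounded-convergence passage, for all bounded measurable functions of $a^T x$; in particular $\int e^{i a^T x}\, d\nu = 0$ for every $a$. Thus the Fourier transform of $\nu$ vanishes identically, and by injectivity of the Fourier transform on finite signed Borel measures $\nu = 0$, forcing $g = 0$ $\mu$-a.e., which contradicts $g \neq 0$.

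I expect the discriminatory step to be the main obstacle: it is where the squashing hypothesis on $\sigma$ is genuinely used (through the limits at $\pm\infty$ in the $\lambda \to \infty$ passage), and it requires the somewhat delicate chain from ``$\nu$ annihilates all sigmoids'' to ``$\nu$ annihilates all half-spaces and hyperplanes'' to ``$\hat{\nu} \equiv 0$,'' culminating in the Fourier-uniqueness theorem. The remaining ingredients are routine: boundedness of the elements of $S$ together with finiteness of $\mu$ guarantees $S \subset L^2(\mu)$ and justifies every application of bounded and dominated convergence, while the Hahn--Banach / orthogonal-complement reduction is standard Hilbert-space theory. Once $\overline{S} = L^2(\mu)$ is established, the conclusion that for each $f \in L^2(\mu)$ and each $\epsilon > 0$ there is $h \in S$ with $\int |f - h|^2\, d\mu < \epsilon$ is simply the definition of density, which completes the proof.
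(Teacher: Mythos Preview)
Your argument is correct: the Hahn--Banach/orthogonal-complement reduction in $L^2(\mu)$ is legitimate because $\mu$ is a probability measure and $\sigma$ is bounded, and the discriminatory step (scaling $\lambda\to\infty$, then reading off that $\nu$ vanishes on half-spaces and hyperplanes, then passing to the Fourier transform) is carried out cleanly. The only place to be a touch more careful is when you invoke ``injectivity of the Fourier transform on finite signed Borel measures'': you should note that $g\in L^2(\mu)\subset L^1(\mu)$ makes $\nu$ a finite signed Borel measure on $\mathbb{R}^d$, so the uniqueness theorem indeed applies; you already say this, so the gap is cosmetic rather than mathematical.

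As for comparison with the paper: there is nothing to compare. The paper does not prove this theorem at all---it is quoted as a known $L^2$ universal-approximation result and used as a black box in the proof of the subsequent normality theorem. If one traces the implicit reference, the result is the $L^p(\mu)$ density theorem of Hornik, Stinchcombe and White, whose original proof proceeds differently from yours: they first establish uniform density on compacta (via a Stone--Weierstrass-type argument for trigonometric/cosine networks, then transfer to an arbitrary squasher by approximating one squasher with another), and then deduce $L^p(\mu)$ density from uniform density plus tightness of $\mu$. Your Cybenko-style route is more direct for the $L^2(\mu)$ statement at hand and avoids the detour through $C(K)$, at the cost of invoking Fourier uniqueness; the Hornik route is more constructive but longer. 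Either approach is fully adequate here.
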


Next, we will establish the asymptotic normality of ENN.
%For $f \in \lbrace \mathcal{F}_{r_n}: \Vert f- f_0 \Vert < \rho_n \rbrace$, where $\rho_n \Vert \hat{f}_n - f_0 \Vert _{n} = O_{p}(1)$
We assume that $f_{0} \in \mathcal{F}$, where $\mathcal{F}$ is the class of continuous functions with compact supports. $f_0$ is a function needed to be estimated.

\begin{thm}
Suppose $\hat{f}_{n}(x) \in \mathcal{F}$ is a sequence of random functions and $\int \vert f_{0}(x) \vert^{2} dP(x) < \infty $. if conditions in consistency exist, 
we can get
$$
\int \left( \hat{f_n}(x) - f_{0}(x) \right) ^2 dP(x) \overset{P}{\to} 0
$$
for some $f_0 \in L_{2}(P)$. Then
$$
\frac{1}{\sqrt{n}} \sum_{i=1}^{n} \left( (\hat{f_n} - f_0)(X_i) - P(\hat{f_n} -f_0) \right) \overset{P}{\to} 0,
$$
and
$$
\frac{1}{\sqrt{n}} \sum_{i=1}^{n} \hat{f_n}(X_i) -P\hat{f_n} \sim N(0, Pf_{0}^2 -(Pf_0)^2)
$$
\end{thm}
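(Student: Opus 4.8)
The plan is to recognize that this statement is simply the specialization of Theorem 4 (the Donsker-class central limit result) to the expectile-network sieve, so the entire task reduces to verifying the two hypotheses flagged immediately after Theorem 4: that $\mathcal{F}_{r_n}$ is a $P$-Donsker class, and that the fitted function converges to $f_0$ in $L_2(P)$, i.e. $\int (\hat{f}_n - f_0)^2\, dP \overset{P}{\to} 0$. Once both hold, the two displayed conclusions are exactly the conclusions of Theorem 4 and nothing further is needed. The Donsker property I would take from the cited Van der Vaart--Wellner result: the sieve consists of single-hidden-layer sigmoidal networks whose output weights are $\ell_1$-bounded and whose inner weights are bounded, so the class has a uniformly bounded envelope and a covering number growing only polynomially in $1/\epsilon$ (precisely the Anthony--Bartlett bound already invoked in the proof of Theorem \ref{ULLN}); the resulting entropy integral is finite, which yields the Donsker property.

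The substance is the $L_2(P)$-consistency condition, and here I would build a bridge from the empirical-norm consistency already established in the consistency theorem. I would write
\[
\int (\hat{f}_n - f_0)^2\, dP \;\le\; \big\| \hat{f}_n - f_0 \big\|_n^2 \;+\; \sup_{f \in \mathcal{F}_{r_n}} \Big| \int (f - f_0)^2\, dP - \tfrac{1}{n}\sum_{i=1}^n (f - f_0)^2(X_i) \Big|.
\]
The first term tends to zero in probability by the consistency theorem, which under the stated condition on $\tau$ and $\sigma^2$ gives $\| \hat{f}_n - f_0 \|_n \overset{p}{\to} 0$. The second term is a uniform law of large numbers over the class $\{(f - f_0)^2 : f \in \mathcal{F}_{r_n}\}$; I would control it exactly as in the proof of Theorem \ref{ULLN}, truncating to a bounded range, applying the Hoeffding/covering-number bound of Lemma 1, and noting that the covering number of the squared class is controlled by that of $\mathcal{F}_{r_n}$ (the squaring map is Lipschitz on bounded sets, the same estimate already used when passing from $\mathcal{F}_{r_n}$ to $\mathcal{G}_{n,1}$). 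The growth condition $[r_n(d+2)+1]\log[r_n(d+2)+1] = o(n)$ again makes the corresponding series summable, so Borel--Cantelli forces the supremum to zero.

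Theorem 5 enters to guarantee that this programme is not vacuous: since $f_0$ is continuous with compact support and $\int |f_0|^2\, dP < \infty$, the $L_2$ universal approximation theorem provides, for each $\epsilon$, a network in the growing sieve within $\epsilon$ of $f_0$ in $L_2$, so the sieve approximation error $\inf_{f \in \mathcal{F}_{r_n}} \int (f - f_0)^2\, dP$ vanishes as $r_n \to \infty$. This is what lets the empirical-norm consistency of the estimator translate into genuine convergence toward $f_0$ rather than toward a biased sieve projection. Combining the vanishing approximation error with the two-term bound above yields $\int (\hat{f}_n - f_0)^2\, dP \overset{P}{\to} 0$, verifying the second hypothesis; invoking Theorem 4 then delivers both displayed limits.

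The main obstacle, I expect, is the tension between the Donsker property---a statement about a single fixed function class---and the fact that the sieve $\mathcal{F}_{r_n}$ genuinely grows with $n$. A clean argument must either fix an ambient Donsker class containing every $\mathcal{F}_{r_n}$ (possible only if the weight bounds $V_n, M_n$ are frozen, which conflicts with $V_n, M_n \to \infty$) or replace the fixed-class Donsker theorem by a triangular-array/bracketing-entropy argument in which the entropy integral is controlled uniformly along the sequence. Reconciling the growing sieve with the asymptotic equicontinuity needed for the central limit step is the delicate point; the truncation and boundedness bookkeeping in the $L_2$-consistency step is routine by comparison, once the covering-number estimates from Theorem \ref{ULLN} are reused.
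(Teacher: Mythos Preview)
Your overall strategy---verify the Donsker hypothesis and the $L_2(P)$-consistency hypothesis, then invoke Theorem~4---is exactly the paper's. Where you diverge is in the route to $\int(\hat f_n-f_0)^2\,dP\overset{P}{\to}0$. The paper introduces a sieve projection $\pi_{r_n}f_0\in\mathcal{F}_{r_n}$ and splits via the triangle inequality,
\[
\|\hat f_n-f_0\|^2\;\le\;\|\hat f_n-\pi_{r_n}f_0\|^2+\|\pi_{r_n}f_0-f_0\|^2,
\]
handling the first piece by the consistency result and the second by the universal-approximation Theorem~5. You instead bridge the empirical norm $\|\cdot\|_n$ to the population $L_2(P)$ norm directly, adding and subtracting $\|\hat f_n-f_0\|_n^2$ and bounding the discrepancy by a uniform law of large numbers over $\{(f-f_0)^2:f\in\mathcal{F}_{r_n}\}$, re-running the covering-number/Hoeffding machinery from Theorem~1. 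Your decomposition is heavier but more explicit about how the empirical-norm consistency actually transfers to $L_2(P)$; the paper's projection argument is shorter but tacitly assumes the same transfer when it invokes ``the result of proving consistency'' (which was stated in $\|\cdot\|_n$) for the $L_2(P)$ quantity $\|\hat f_n-\pi_{r_n}f_0\|$. Your closing concern---that a Donsker statement is about a fixed class while $\mathcal{F}_{r_n}$ genuinely grows with $n$ through $V_n,M_n\to\infty$---is well taken and is a point the paper also leaves unresolved; a fully rigorous version would need either frozen weight bounds yielding a single ambient Donsker class or a triangular-array equicontinuity argument with uniform entropy control along the sequence.
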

\begin{proof}
Let $\pi_{r_n}f_{0} \in \mathcal{F}_{r_n}$
\begin{equation}
\begin{split}
&\Vert \hat{f}_{n}(x) - f_{0}(x) \Vert^{2} \leq \Vert \hat{f}_{n} - \pi_{n}f_{0}  \Vert^{2} + \Vert \pi_{n}f_0 -f_0 \Vert^{2} \\
\end{split}
\end{equation}
Using the result of proving consistency of ENN
$$\Vert \hat{f}_{n} - \pi_{n}f_{0}  \Vert^{2} \overset{p}{\to} 0$$
By theorem 5,
$$\Vert \pi_{n}f_0 -f_0 \Vert^{2} < \epsilon$$ 
With theorem 6, we can get the result.
\end{proof}

%\section{Reference}

\section*{Acknowledgment}
This work was supported by NIH 1R01DA043501-01 and NIH 1R01LM012848-01.

\newpage

\end{document}